\newcommand{\R}{\mathbb{R}}
\newcommand{\Nc}{\mathcal{N}}
\DeclareMathOperator{\prox}{prox}
\newtheorem{proposition}{Proposition}
\newtheorem{ass}{Assumption}
\newtheorem{corollary}{Corollary}
\newtheorem{remark}{Remark}
\newtheorem{lemma}{Lemma}
\begin{document}

\title{Diffusion at Absolute Zero: Langevin Sampling Using Successive Moreau Envelopes
}

\author{\IEEEauthorblockN{Andreas Habring}
\IEEEauthorblockA{\textit{
Institute of Visual Computing} \\
\textit{Graz University of Technology}\\
\textit{Graz, Austria} \\
\textit{andreas.habring@tugraz.at}}
\and
\IEEEauthorblockN{Alexander Falk}
\IEEEauthorblockA{
\textit{Institute of Visual Computing} \\
\textit{Graz University of Technology}\\
\textit{Graz, Austria} \\
\textit{falk@tugraz.at}}
\and
\IEEEauthorblockN{Thomas Pock}
\IEEEauthorblockA{
\textit{Institute of Visual Computing} \\
\textit{Graz University of Technology}\\
\textit{Graz, Austria }\\
\textit{thomas.pock@tugraz.at}}
}

\maketitle

\begin{abstract}
In this article we propose a novel method for sampling from Gibbs distributions of the form $\pi(x)\propto\exp(-U(x))$ with a potential $U(x)$. In particular, inspired by diffusion models we propose to consider a sequence $(\pi^{t_k})_k$ of approximations of the target density, for which $\pi^{t_k}\approx \pi$ for $k$ small and, on the other hand, $\pi^{t_k}$ exhibits favorable properties for sampling for $k$ large. This sequence is obtained by replacing parts of the potential $U$ by its Moreau envelopes. Sampling is performed in an Annealed Langevin type procedure, that is, sequentially sampling from $\pi^{t_k}$ for decreasing $k$, effectively guiding the samples from a simple starting density to the more complex target. In addition to a theoretical analysis we show experimental results supporting the efficacy of the method in terms of increased convergence speed and applicability to multi-modal densities $\pi$.
\end{abstract}

\begin{IEEEkeywords}
Langevin diffusion, Markov chain Monte Carlo, sampling, inverse imaging
\end{IEEEkeywords}

\section{Introduction}
This article is concerned with Langevin sampling from distributions of the form
\begin{equation}\label{eq:potential}
    \pi(x) = \frac{e^{-U(x)}}{\int e^{-U(y)}\; dy}
\end{equation}
with potentials $U(x) = F(x) + G(x)$ and respective assumptions on $F$ and $G$ (\Cref{ass}). In particular, we are interested in settings where potentially $U$ is not (strongly) convex, non-differentiable and/or not strongly convex such that conventional Langevin sampling techniques are ineffective or too slow. The proposed method is inspired by score based diffusion models \cite{song2020score,song2019generative,ho2020denoising}. More precisely, we propose to successively sample from a deliberately defined sequence of distributions $\pi^t(x)$ for decreasing $t>0$ where $\pi^t$ admits favorable properties for sampling when $t\gg 0$ and converges to the target $\pi$ as $t\rightarrow 0$. Inspired by \cite{durmus2018efficient} this sequence is obtained by approximating the functional $G$ by its Moreau-Yoshida envelope with Moreau-Yoshida parameter $t$.

Sampling from potentials of the form \eqref{eq:potential} is a task frequently arising, for instance, in Bayesian inverse problems, mathematical imaging, machine learning, and uncertainty quantification \cite{zach2023stable,zach2021computed,luo2023bayesian,pereyra2016proximal,durmus2018efficient,narnhofer2022posterior}. In particular, training strategies for machine learning such as maximum likelihood estimation \cite{du2020improved,zach2023stable,nijkamp2020anatomy}, which have become increasingly relevant due to the rise of data driven approaches, often rely on sampling algorithms as subroutines. 
A popular technique for sampling from \eqref{eq:potential} are Markov chain Monte Carlo methods, where a Markov chain $(X_k)_{k\geq1}\subset \R^d$ is carefully designed such that the law of $X_k$ converges to the target distribution $\pi(x)$ as $k\rightarrow\infty$. In the particular case of Langevin sampling such Markov chains are obtained as discretizations of the Langevin diffusion SDE
\begin{equation}\label{eq:langevin_sde}
    dX_t = -\nabla U (X_t) dt + \sqrt{2}dW_t
\end{equation}
where $(W_t)_t$ denotes Brownian motion. While more sophisticated schemes have been proposed a straightforward way to obtain a Markov chain approximating \eqref{eq:langevin_sde} is via the Euler-Maruyama disretization leading to the so-called unadjusted Langevin algorithm (ULA)
\begin{equation}
    X_{k+1} = X_k -\tau\nabla U(X_k) + \sqrt{2\tau} Z_k
\end{equation}
with $\tau>0$ the step-size and $(Z_k)_k$ i.i.d Gaussian random variables. For various convergence results of such schemes we refer the reader to \cite{durmus2019analysis,durmus2019high,durmus2017nonasymptotic,dalalyan2017theoretical,lamberton2003recursive,habring2024subgradient,fruehwirth2024ergodicity}.

\subsection{Denoising Score Matching and Annealed Langevin}
A particular inspiration for the present article has been denoising score matching (DSM) and Annealed Langevin sampling \cite{song2019generative}, which can be classified under the umbrella term \emph{diffusion models} \cite{song2020score}. In particular, in \emph{variance exploding} DSM one considers a sequence of perturbations of the target density $\pi$ defined as convolutions with centered Gaussians with different noise levels $0<t_0<\dots<t_n$, $\pi_{t_k} = \pi*\Nc(0,\sqrt{t_k})$. Thus, samples from $\pi_{t_k}(x)$ can be obtained from samples from $\pi(x)$ by adding a Gaussian random variable. For large $t_k$, the distribution $\pi_{t_k}$ becomes progressively well-behaved, namely smooth, everywhere non-zero, and increasingly \emph{Gaussian}. Therefore, the distributions $\pi_{t_k}(x)$ (respectively, their scores $\nabla\log \pi_{t_k}(x)$) are more stable to train and easier to sample from rather than $\pi$ (for a detailed explanation, see \cite{song2019generative}). In order to sample from $\pi(x)$, these properties can be exploited by sequentially sampling from $\pi_{t_k}$ for $k=n,n-1\dots 0$ using Langevin sampling for each noise level. This technique is coined \emph{Annealed Langevin} sampling. Figuratively speaking, the sampling procedure starts at a well-behaved distribution and is guided towards an increasingly complex one.

\section{Diffusion at Absolute Zero}
For a function $G:\R^d\rightarrow \R$ the Moreau envelope with parameter $t>0$ is defined as 
\begin{equation}\label{eq:moreau}
    M_G^t(x) \coloneqq \inf_{y\in\R^d} G(y)+\frac{1}{2t}\|x-y\|^2.
\end{equation}
Moreover, we define the proximal mapping, or simply \emph{prox}, ${\prox}_{tG}(x)$ of $G$ as the (possibly multi-valued or empty) set of minimizers of \eqref{eq:moreau}. It always holds that $M_G^t\leq G$ and under mild assumptions (which are in particular satisfied under \Cref{ass}) the Moreau envelope is continuous in $(x,t)$ and satisfies $M^t_G(x)\rightarrow G(x)$ as $t\rightarrow 0$ which justifies using it as an approximation of $G$ \cite[Theorem 1.25]{rockafellar2009variational}. Moreover a crucial property of the Moreau envelope is that $\nabla M_G^t(x) \in \frac{1}{t}(x-{\prox}_{t G}(x))$, at every $x$ where $M_G^t$ is differentiable \cite[10.32 Example]{rockafellar2009variational}, which yields a practical way of computing the gradient of $M^t_G$ by solving an optimization problem, at least if the prox is unique.

In many applications (cf. \cite{narnhofer2022posterior}) the potential $U=F+G$ can be split\footnote{Note that it is always possible to define $F\equiv 0$ and $G=U$.} into a rather well-behaved $F(x)$ (e.g. a squared $\ell_2$ norm for inverse problems with Gaussian noise), and a more difficult to handle $G(x)$ (e.g. non-smooth and/or non-convex \cite{kobler2020total}). Therefore, inspired by DSM we propose to consider the sequence of perturbed densities $\pi^t(x) \coloneqq \frac{1}{Z_t} e^{-U^t(x)}$, $t>0$ where
\[
    U^t(x)\coloneqq F(x) + M_G^t(x).
\]
and $Z_t=\int_{\R^d}e^{-U^t(x)}dx$. As a sampling strategy we propose to apply Annealed Langevin sampling to $(\pi^t)_t$ which we coin \emph{diffusion at absolute zero} (DAZ) (see \Cref{sec:relation_to_diffusion}). The method is depicted in \Cref{algo}. There, $0<t_0<t_1\dots<t_N$ denotes a sequence of Moreau parameters and $(\tau_n)_n$ a corresponding sequence of step sizes used in each Langevin sampling loop. A major benefit of the proposed method, which is in particular a reason for increased convergence speed, is that for large $t_n$ larger step sizes can be used without violating the conditions for ergodicity since the gradient of the Moreau envelope $\nabla M_G^t$ is $\frac{1}{t}$-Lipschitz if $G$ is convex (cf. \cite{durmus2018efficient}; for the non-convex case, see the remarks in the Gaussian mixture experiment below). The larger step sizes, in turn, lead to faster mixing of the Markov chain. In contrast to conventional diffusion, DAZ is applicable to any potential $U$ without prior training.
\begin{algorithm}
\footnotesize
\caption{Diffusion at Absolute Zero (DAZ)}\label{algo}
\begin{algorithmic}[1]
\For{$n=N,\dots, 1$}
    \For{$k=0,1,\dots K-1$}
        \State $Z\sim\mathcal{N}(0,I)$
        \State $X_{k+1}^n =$
        \[
            X_k^{n} - \tau_n \nabla F(X_k^n)
                - \frac{\tau_n}{t_n}\left(X_k^n - {\prox}_{t_n G}(X_k^n) \right) + \sqrt{2\tau_n} Z
        \]
    \EndFor
    \State $X_{0}^{n-1} = X_{K}^n$
\EndFor
\end{algorithmic}
\end{algorithm}
While in \cite{durmus2018efficient} the main motivation for using the Moreau envelope for sampling was to handle non-differentiabilites of $G$, in the present work we also want to emphasize the \emph{convexifying} properties of the Moreau envelope. As shown in \Cref{fig:moreau_gm} as $t$ increases, the local maximum of the potential at the center decreases, thus, \emph{connecting} the two separated minima. As we will see in \Cref{sec:experiments} this will enable DAZ to quickly cover all modes of a multi-modal distribution.
\begin{figure}
    \centering
    \includegraphics[width=0.4\textwidth]{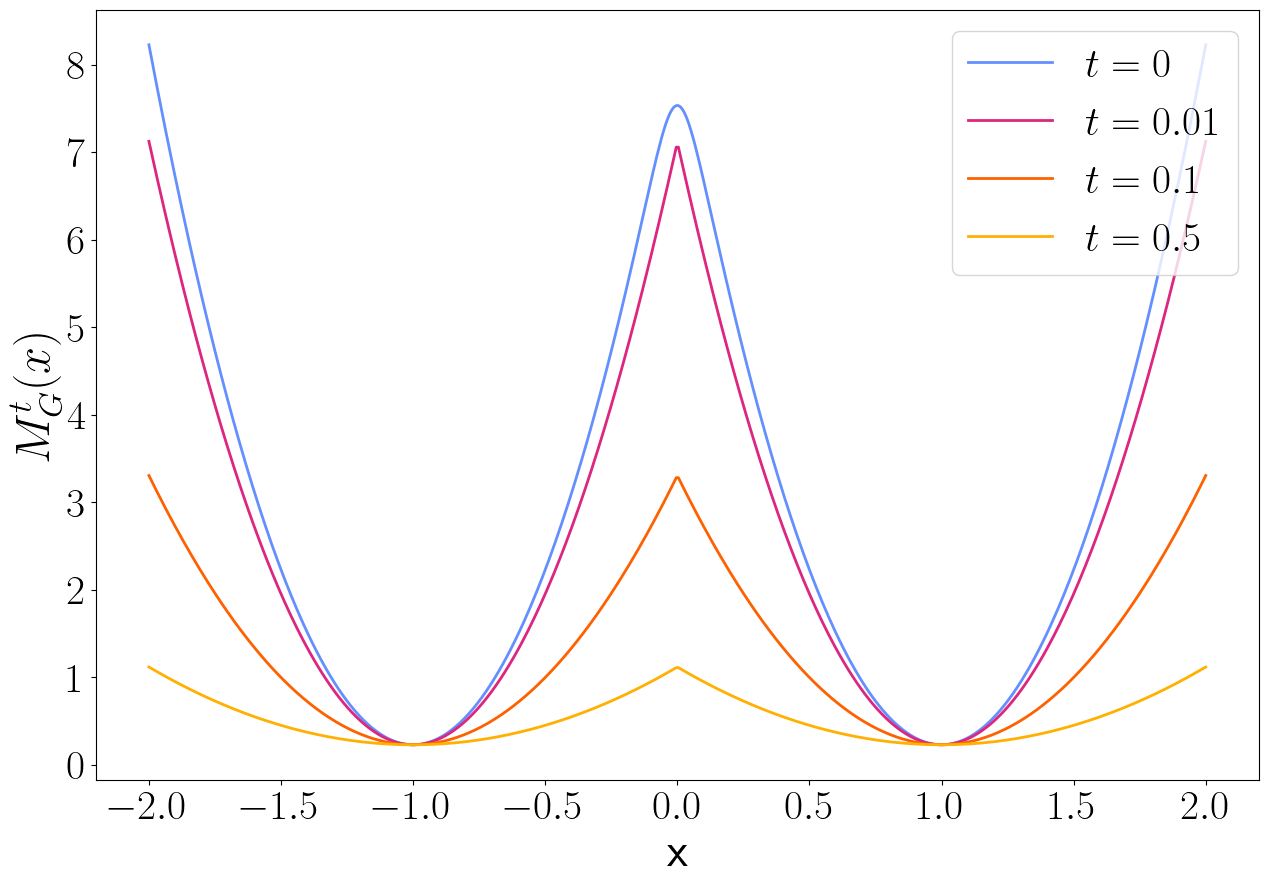}
    \caption{Moreau envelopes of the potential for a Gaussian mixture for different Moreau parameters.}
    \label{fig:moreau_gm}
\end{figure}
\subsection{Consistency of Diffusion at Absolute Zero}
The aim of this section is to show that the proposed sampler DAZ is, in fact, consistent in the sense that the sequence of distributions $(\pi^t)_t$ is well-defined, Lipschitz continuous, and approximates the target $\pi$ as $t\rightarrow0$. In order to analyze the method theoretically, from now on we impose the following assumptions on $F$ and $G$.
\begin{ass}\label{ass}\
    \begin{enumerate}
        \item $F$ and $G$ are bounded from below and such that $\int_{\R^d}\exp(-U(x))dx<\infty$.
        \item $G$ is proper, continuous, coercive, and there exists $t_{max}>0$ with ${\prox}_{tG}(x)$ being single-valued for any $x$ and $t<t_{max}$.
        \item There exists $K>0$ such that for $\|x\|\geq K$, $\|{\prox}_{t G}(x)\|\leq \|x\|$ for any $t\geq 0$.
        \item $G$ admits a regular subgradient $\partial G$ (\cite[Definition 8.3]{rockafellar2009variational}) and the function $\phi(s)\coloneqq \sup \{\|p\|\;|\;\|x\|\leq s, p\in \partial G(x)\}$ satisfies %
        \[\int \phi(\|x\|)^2e^{-U^{t_{max}}(x)}dx<\infty.\]
    \end{enumerate}
\end{ass}

\begin{lemma}\label{lemma:prox_time_cont}
    The mapping $t\mapsto{\prox}_{tG}(x)$ is continuous on $(0,t_{max})$ for any fixed $x$.
\end{lemma}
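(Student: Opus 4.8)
The plan is to establish sequential continuity via a standard argmin-stability argument based on compactness. Fix $x\in\R^d$ and a point $t_0\in(0,t_{max})$, and take an arbitrary sequence $t_k\to t_0$ with $t_k\in(0,t_{max})$. Set $y_k:=\prox_{t_k G}(x)$ and $y_0:=\prox_{t_0 G}(x)$; these are well-defined single points, since $G$ being proper, continuous and coercive (\Cref{ass}) makes $g_s(y):=G(y)+\frac{1}{2s}\|x-y\|^2$ lower semicontinuous and coercive for each $s>0$ (so a minimizer exists), while \Cref{ass} guarantees uniqueness for $s<t_{max}$. It then suffices to show $y_k\to y_0$.

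First I would show that $(y_k)_k$ is bounded. By definition $G(y_k)+\frac{1}{2t_k}\|x-y_k\|^2=M_G^{t_k}(x)$. Since $t\mapsto M_G^t(x)$ is continuous (the Moreau envelope is continuous in $(x,t)$ under \Cref{ass}, as recalled above) and $t_k\to t_0$, the values $M_G^{t_k}(x)$ are bounded by some constant $C$; and since $G$ is bounded from below, say by $-B$ (\Cref{ass}), we obtain $\frac{1}{2t_k}\|x-y_k\|^2\le C+B$, hence $\|x-y_k\|^2\le 2t_{max}(C+B)$. Thus $(y_k)_k$ lies in a bounded set.

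Next I would identify the limit of any convergent subsequence. Passing to a subsequence $y_{k_j}\to\bar y$, optimality of $y_{k_j}$ gives, for every $z\in\R^d$, that $G(y_{k_j})+\frac{1}{2t_{k_j}}\|x-y_{k_j}\|^2\le G(z)+\frac{1}{2t_{k_j}}\|x-z\|^2$. Letting $j\to\infty$ and using continuity of $G$ together with $t_{k_j}\to t_0>0$ (so both quadratic terms converge) yields $G(\bar y)+\frac{1}{2t_0}\|x-\bar y\|^2\le G(z)+\frac{1}{2t_0}\|x-z\|^2$ for all $z$, i.e.\ $\bar y\in\prox_{t_0 G}(x)=\{y_0\}$, so $\bar y=y_0$. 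Since $(y_k)_k$ is bounded and every convergent subsequence has the same limit $y_0$, the full sequence converges to $y_0$ (otherwise a subsequence staying at distance $\ge\varepsilon$ from $y_0$ would, by boundedness, admit a further subsequence converging to some point $\ne y_0$, contradicting the above). As $t_0$ and the sequence were arbitrary, $t\mapsto\prox_{tG}(x)$ is continuous on $(0,t_{max})$.

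The main obstacle is the limit-identification step: it relies crucially on $t_0>0$, so that the penalty $\frac{1}{2t}\|x-\cdot\|^2$ has a non-degenerate limit through which the convergence $y_{k_j}\to\bar y$ can be passed, and on the single-valuedness from \Cref{ass} to pin the limit down to $y_0$ rather than merely to the (a priori possibly nonunique) set of minimizers. The boundedness step is where the continuity of the Moreau envelope in $t$ and the lower bound on $G$ enter.
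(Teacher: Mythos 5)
Your proposal is correct and follows essentially the same route as the paper's proof: boundedness of the proximal points, extraction of a convergent subsequence, identification of its limit via the optimality inequality together with continuity of $G$ and single-valuedness from \Cref{ass}, and a standard subsequence argument to upgrade to full convergence. The only cosmetic difference is that you bound $M_G^{t_k}(x)$ using continuity of the envelope in $t$, whereas the even simpler bound $M_G^{t_k}(x)\leq G(x)$ (take $y=x$ in the infimum) suffices, which is what the paper's terse boundedness claim amounts to.
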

\begin{proof}
    Let $p_\tau = {\prox}_{\tau G}(x)$ and $t_n\rightarrow t\in(0,t_{max})$. By definition of the proximal mapping and boundedness from below of $G$, $(p_{t_n})_n$ is bounded. Therefore, there exists a (not-relabeled) convergent subsequence $p_{t_n}\rightarrow \hat{p}$. It holds for arbitrary $y\in\R^d$
    \[
        \frac{1}{2t_n}\|x-p_{t_n}\|^2+G(p_{t_n})\leq \frac{1}{2t_n}\|x-y\|^2+G(y).
    \]
    Taking the limit and noting that $G$ is continuous shows that $\hat{p} = p_t$ by uniqueness of the prox (\Cref{ass}). Convergence of the original sequence $(p_{t_n})_n$ to $\hat{p}$ follows by a standard subsequence argument and the fact that ${\prox}_{\tau G}(x)$ is unique.
\end{proof}

\begin{lemma}\label{lemma:moreau_diffble}
    The function $\tau\mapsto M_G^\tau(x)$ is differentiable on $(0,t_{max})$ for any fixed $x\in\R^d$.
\end{lemma}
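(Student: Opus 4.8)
The plan is to identify the derivative through an envelope (Danskin-type) argument and then confirm it rigorously with a squeeze estimate, leaning on the time-continuity of the prox already established in \Cref{lemma:prox_time_cont}. Writing $p_\tau \coloneqq {\prox}_{\tau G}(x)$, the objective $\Phi(y,\tau) \coloneqq G(y) + \frac{1}{2\tau}\|x-y\|^2$ is smooth in $\tau$ with $\partial_\tau \Phi(y,\tau) = -\frac{1}{2\tau^2}\|x-y\|^2$, so I expect the value function to inherit this partial derivative at the (unique) minimizer, giving
\[
    \frac{d}{d\tau} M_G^\tau(x) = -\frac{1}{2\tau^2}\|x - p_\tau\|^2.
\]

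To make this precise I would fix $t \in (0,t_{max})$ and sandwich the increment $M_G^{t+h}(x) - M_G^t(x)$ for small $h$ by inserting suboptimal candidates into the infimum defining the Moreau envelope. Feasibility of $p_t$ for parameter $t+h$ yields $M_G^{t+h}(x) \leq G(p_t) + \frac{1}{2(t+h)}\|x-p_t\|^2$, which after subtracting $M_G^t(x) = G(p_t) + \frac{1}{2t}\|x-p_t\|^2$ gives the upper estimate
\[
    M_G^{t+h}(x) - M_G^t(x) \leq \left( \tfrac{1}{2(t+h)} - \tfrac{1}{2t} \right)\|x - p_t\|^2 = \frac{-h}{2t(t+h)}\|x - p_t\|^2,
\]
while feasibility of $p_{t+h}$ for parameter $t$ gives the matching lower estimate
\[
    M_G^{t+h}(x) - M_G^t(x) \geq \frac{-h}{2t(t+h)}\|x - p_{t+h}\|^2.
\]

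Dividing by $h$ (and flipping the inequalities when $h<0$) sandwiches the difference quotient $\tfrac{1}{h}\left(M_G^{t+h}(x) - M_G^t(x)\right)$ between $-\frac{1}{2t(t+h)}\|x - p_t\|^2$ and $-\frac{1}{2t(t+h)}\|x - p_{t+h}\|^2$. As $h \to 0$ (with $t+h$ remaining in $(0,t_{max})$ for $|h|$ small), \Cref{lemma:prox_time_cont} forces $p_{t+h} \to p_t$, so both bounds converge to the common limit $-\frac{1}{2t^2}\|x - p_t\|^2$; the squeeze theorem then delivers the two-sided derivative with the claimed value.

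The computation itself is routine once the sandwich is in place; the only genuinely delicate ingredient is the continuity of $p_\tau$ in $\tau$, which is precisely why \Cref{lemma:prox_time_cont} is proved beforehand. I would also remark that single-valuedness of the prox on $(0,t_{max})$ (\Cref{ass}) is what keeps the envelope derivative well-defined and free of the correction terms that would otherwise arise for set-valued minimizers.
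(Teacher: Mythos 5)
Your proposal is correct and follows essentially the same route as the paper: sandwiching the increment of the Moreau envelope by inserting the suboptimal points $p_t$ and $p_{t+h}$ into the infimum, dividing by the parameter increment, and invoking \Cref{lemma:prox_time_cont} to pass to the limit, arriving at the same derivative $-\frac{1}{2t^2}\|x-p_t\|^2$. The only (cosmetic) difference is that you use a signed increment $h$ and explicitly note the inequality flip for $h<0$, where the paper works with $0<s<t$ and lets $|s-t|\rightarrow 0$.
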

\begin{proof}
    Denote let $0<s<t<t_{max}$. By definition of the Moreau envelope we find
    \begin{equation}
        \begin{aligned}
            \frac{1}{2t}\|x-p_t\|^2+G(p_t) - \frac{1}{2s}\|x-p_t\|^2-G(p_t)\\
            \leq M_G^t(x)-M_G^s(x)\\
            \leq\frac{1}{2t}\|x-p_s\|^2+G(p_s) - \frac{1}{2s}\|x-p_s\|^2-G(p_s).
        \end{aligned}
    \end{equation}
    It follows 
    \[
        -\frac{1}{2st}\|x-p_t\|^2\leq \frac{M_G^t(x)-M_G^s(x)}{t-s}\leq -\frac{1}{2st}\|x-p_s\|^2.
    \]
    Letting $|s-t|\rightarrow 0$, by \Cref{lemma:prox_time_cont} the result follows and, in particular, we obtain $\partial_t M_G^t(x) = -\frac{1}{2t^2}\|x-p_t\|^2$.
\end{proof}
\begin{remark}
    This shows -- without assuming Lipschitz continuity of $G$ (cf. \cite[Theorem 5, Section 3.3.2]{evans2022partial}) -- that $M_G^t(x)$ satisfies the Hamilton-Jacobi equation
    \[
        \partial_tM_G^t(x) + \frac{1}{2}\|\nabla M_G^t(x)\|^2 = 0.
    \]
\end{remark}
\begin{proposition}\label{prop:Moreau_cont}
    The function $\tau\rightarrow M_G^\tau(x)$ is Lipschitz continuous on $(0,t_{max})$ for any $x\in\R^d$, more precisely, there exists $C>0$ such that for $0<s,t<t_{max}$, $x\in\R^d$
    \begin{equation}
        |M_G^t(x)-M_G^s(x)|\leq \frac{|s-t|}{2}(\phi(\|x\|) + C)^2.
    \end{equation}
\end{proposition}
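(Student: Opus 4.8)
The plan is to combine the differentiability result of \Cref{lemma:moreau_diffble} with the mean value theorem, so that it suffices to bound the derivative $\partial_\tau M_G^\tau(x) = -\frac{1}{2\tau^2}\|x-p_\tau\|^2$ uniformly in $\tau\in(0,t_{max})$, where $p_\tau = {\prox}_{\tau G}(x)$. Indeed, once we establish a bound of the form $\frac{\|x-p_\tau\|}{\tau}\leq \phi(\|x\|)+C$ that is uniform in $\tau$, we get $|\partial_\tau M_G^\tau(x)|\leq \frac12(\phi(\|x\|)+C)^2$, and the mean value theorem applied to $\tau\mapsto M_G^\tau(x)$ on $[\min(s,t),\max(s,t)]$ immediately yields the claimed estimate. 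Note that $\frac{x-p_\tau}{\tau}$ is exactly the gradient $\nabla M_G^\tau(x)$, so the entire task reduces to controlling the norm of this gradient.

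The key algebraic observation is that $p_\tau$ minimises $y\mapsto G(y)+\frac{1}{2\tau}\|x-y\|^2$, so Fermat's rule together with the sum rule for the regular subgradient (the second summand being smooth) gives the optimality condition $\frac{1}{\tau}(x-p_\tau)\in\partial G(p_\tau)$. Consequently $\frac{\|x-p_\tau\|}{\tau}\leq \sup\{\|p\|\;|\;p\in\partial G(p_\tau)\}\leq \phi(\|p_\tau\|)$, where the last step uses that $p_\tau$ is itself a point of norm at most $\|p_\tau\|$ and that $\phi$ is non-decreasing by construction (the defining supremum is taken over a ball that grows with its argument). Thus everything hinges on a good bound for $\|p_\tau\|$ in terms of $\|x\|$.

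Here I would split into two regimes. For $\|x\|\geq K$, \Cref{ass}(3) directly gives $\|p_\tau\|\leq\|x\|$, and monotonicity of $\phi$ yields $\frac{\|x-p_\tau\|}{\tau}\leq\phi(\|x\|)$, so no additive constant is needed. For $\|x\|<K$ the assumption is silent, and this is the main obstacle: instead I would use the defining inequality of the prox with the competitor $y=x$, namely $\frac{1}{2\tau}\|x-p_\tau\|^2\leq G(x)-G(p_\tau)\leq G(x)-\inf G$, together with continuity of $G$ on the compact ball $\{\|x\|\leq K\}$ and with $\tau<t_{max}$, to conclude that $\|p_\tau\|$ remains inside a fixed ball of some radius $R$ independent of $x$ and $\tau$. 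Setting $C\coloneqq\phi(R)$ then gives $\frac{\|x-p_\tau\|}{\tau}\leq\phi(R)=C\leq\phi(\|x\|)+C$ on this regime.

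Combining both regimes yields $\frac{\|x-p_\tau\|}{\tau}\leq \phi(\|x\|)+C$ uniformly in $\tau\in(0,t_{max})$, whence $|\partial_\tau M_G^\tau(x)|\leq\frac12(\phi(\|x\|)+C)^2$, and the mean value theorem step above completes the proof. I expect the only delicate points to be the justification of the subgradient sum rule in the possibly non-convex setting and the uniform control of $\|p_\tau\|$ for small $\|x\|$; both rely solely on coercivity, continuity, and boundedness from below of $G$ as guaranteed by \Cref{ass}, so no further regularity of $G$ is required.
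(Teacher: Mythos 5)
Your proposal is correct and follows essentially the same route as the paper: both arguments bound the derivative $\partial_\tau M_G^\tau(x) = -\frac{1}{2\tau^2}\|x-p_\tau\|^2$ from \Cref{lemma:moreau_diffble} using the optimality condition $\frac{1}{\tau}(x-p_\tau)\in\partial G(p_\tau)$, split into the regimes $\|x\|\geq K$ (where \Cref{ass} gives $\|p_\tau\|\leq\|x\|$ and hence the bound $\phi(\|x\|)$) and $\|x\|< K$ (where $\|p_\tau\|$ is bounded by a fixed constant, giving the bound $C$), and then recover the Lipschitz estimate from the derivative bound. The only deviations are cosmetic: you invoke the mean value theorem where the paper integrates the derivative, and you explicitly justify the boundedness of $\|p_\tau\|$ for $\|x\|<K$ via the competitor $y=x$ in the prox definition, a step the paper merely asserts.
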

\begin{proof}
    Using \Cref{lemma:moreau_diffble} we find for $0<s<t$
    \begin{equation}
        \begin{aligned}
            |M_G^t(x)-M_G^s(x)| \leq \int_s^t |\partial_\tau M_G^\tau(x)|\;d\tau \\
            = \frac{1}{2}\int_s^t\frac{1}{\tau^2}\|x-p_\tau\|^2 \;d\tau.
        \end{aligned}
    \end{equation}
    By definition of the Moreau envelope $\frac{1}{\tau}(x-p_\tau)\in\partial G(p_\tau)$. By \Cref{ass} there exists $K>0$, such that for $\|x\|\geq K$, $\|{\prox}_{\tau G}(x)\|\leq \|x\|$ implying $\|\partial G(p_\tau)\|\leq \phi(\|x\|)$. For $\|x\|\leq K$, on the other hand, $\|p_\tau\|$ is bounded by a constant depending on $K$ and thus $\|\partial G(p_\tau)\|\leq C$. Altogether $|M_G^t(x)-M_G^s(x)|  \leq \frac{|t-s|}{2}(\phi(\|x\|) + C)^2$.
\end{proof}
\begin{corollary}\label{cor:cont_normalization}
    The map $\tau\mapsto Z_\tau$ with $Z_\tau=\int_{\R^d}e^{-U^\tau(x)}dx$ is Lipschitz continuous on $(0,t_{max})$.
\end{corollary}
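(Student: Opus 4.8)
The plan is to write, for $0<s,t<t_{max}$,
\[
    Z_t-Z_s=\int_{\R^d}e^{-F(x)}\bigl(e^{-M_G^t(x)}-e^{-M_G^s(x)}\bigr)\,dx,
\]
and to control the inner difference pointwise by combining \Cref{prop:Moreau_cont} with a uniform integrable majorant. I would start from the elementary inequality $|e^{-a}-e^{-b}|\le e^{-\min(a,b)}|a-b|$ (mean value theorem for $\tau\mapsto e^{-\tau}$), which reduces the task to bounding the prefactor $e^{-F(x)}e^{-\min(M_G^t(x),M_G^s(x))}$ by an integrable function independent of $s$ and $t$.

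The key structural observation is that $\tau\mapsto M_G^\tau(x)$ is non-increasing: since $M_G^\tau(x)=\inf_{y}G(y)+\frac{1}{2\tau}\|x-y\|^2$ and the coefficient $\frac{1}{2\tau}$ decreases as $\tau$ grows, the infimum decreases as well. Consequently, for $0<s,t<t_{max}$ we have $\min(M_G^t(x),M_G^s(x))\ge M_G^{t_{max}}(x)$, so that $e^{-F(x)}e^{-\min(M_G^t(x),M_G^s(x))}\le e^{-U^{t_{max}}(x)}$ uniformly in $s,t$. Combined with the Lipschitz estimate of \Cref{prop:Moreau_cont}, this yields the pointwise bound
\[
    e^{-F(x)}\bigl|e^{-M_G^t(x)}-e^{-M_G^s(x)}\bigr|\le \frac{|s-t|}{2}\,(\phi(\|x\|)+C)^2\,e^{-U^{t_{max}}(x)}.
\]
Integrating over $\R^d$ then immediately gives $|Z_t-Z_s|\le L\,|s-t|$ with $L=\tfrac12\int_{\R^d}(\phi(\|x\|)+C)^2 e^{-U^{t_{max}}(x)}\,dx$.

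It therefore only remains to verify $L<\infty$. Expanding the square, the term $\int\phi(\|x\|)^2 e^{-U^{t_{max}}(x)}\,dx$ is finite by the fourth part of \Cref{ass}, the constant term equals $C^2 Z_{t_{max}}$, and the cross term is finite by Cauchy--Schwarz; hence $L<\infty$ as soon as $Z_{t_{max}}<\infty$. The main (and essentially only delicate) obstacle is this finiteness of the limiting normalization, which I would deduce from coercivity of $G$ (\Cref{ass}, item~2) propagating to $M_G^{t_{max}}$ via item~3: writing $p=\prox_{t_{max}G}(x)$, as $\|x\|\to\infty$ either $\|p\|$ stays bounded, forcing $\frac{1}{2t_{max}}\|x-p\|^2\to\infty$, or $\|p\|\to\infty$, forcing $G(p)\to\infty$; in both cases $M_G^{t_{max}}(x)\to\infty$, and the quadratic lower bound in the first regime is what secures integrability of $e^{-U^{t_{max}}}$ rather than mere coercivity. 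Establishing this growth rate carefully is the step I expect to require the most attention.
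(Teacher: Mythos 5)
Your overall strategy is exactly the one the paper intends: the corollary is stated without its own proof, as a direct consequence of \Cref{prop:Moreau_cont} and the fourth item of \Cref{ass}, and your chain of reasoning --- the mean-value inequality $|e^{-a}-e^{-b}|\le e^{-\min(a,b)}|a-b|$, the monotonicity of $\tau\mapsto M_G^\tau(x)$ giving the uniform majorant $e^{-U^{t_{max}}}$, and integration of the pointwise bound $\tfrac{|s-t|}{2}(\phi(\|x\|)+C)^2e^{-U^{t_{max}}(x)}$ --- is the natural way to fill it in. You also correctly notice something the paper glosses over: Assumption item 4 only controls $\int\phi^2e^{-U^{t_{max}}}$, so finiteness of the Lipschitz constant additionally requires $Z_{t_{max}}<\infty$, which does \emph{not} follow from item 1 (the Moreau envelope satisfies $M_G^t\le G$, so $U^{t_{max}}\le U$ and the comparison goes the wrong way).

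However, your proposed argument for $Z_{t_{max}}<\infty$ has a genuine gap. In your dichotomy, the regime where $\|p\|\to\infty$ (with $\|x-p\|$ possibly bounded) only yields $M_G^{t_{max}}(x)\ge G(p)\to\infty$ at the rate of coercivity of $G$, and --- as you yourself point out --- mere coercivity does not imply integrability of the exponential: growth like $\sqrt{\log\|x\|}$ is coercive, yet $e^{-M_G^{t_{max}}}$ would then fail to be integrable unless $F$ compensates. So the prox-based dichotomy cannot be closed without extra growth assumptions. A fix that stays entirely inside \Cref{ass} avoids the prox altogether and reuses item 4: the function $\phi$ is non-decreasing, and it cannot vanish identically, since $\phi\equiv 0$ together with $\frac{1}{t}(x-\prox_{tG}(x))\in\partial G(\prox_{tG}(x))$ would force $\prox_{tG}(x)=x$ for all $x$ and $t<t_{max}$, hence $|G(x)-G(y)|\le\frac{1}{2t}\|x-y\|^2$ for all $x,y$, making $G$ differentiable with vanishing gradient and therefore constant, contradicting coercivity. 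Thus there is $s_0$ with $\phi(s_0)>0$, and splitting the integral at radius $s_0$ gives
\begin{equation*}
Z_{t_{max}}\le \mathrm{vol}\left(\{\|x\|\le s_0\}\right)e^{-\inf F-\inf G}+\frac{1}{\phi(s_0)^2}\int_{\R^d}\phi(\|x\|)^2e^{-U^{t_{max}}(x)}\,dx<\infty,
\end{equation*}
where the first term uses that $U^{t_{max}}=F+M_G^{t_{max}}\ge\inf F+\inf G$ and the second is finite by item 4 of \Cref{ass}. With this replacement of your final step, your proof is complete.
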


\begin{proposition}\label{prop:curve_lipschitz}
    The curve $(\pi^\tau)_{\tau\geq 0}$ is Lipschitz continuous on $(0,t_{max})$ with respect to the total variation norm.%
\end{proposition}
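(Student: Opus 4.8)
The plan is to estimate the total variation norm directly from
\[
    \|\pi^s-\pi^t\|_{TV} = \int_{\R^d}|\pi^s(x)-\pi^t(x)|\;dx
\]
and to extract a factor $|s-t|$ multiplied by a constant that is independent of $s,t\in(0,t_{max})$. First I would use the elementary splitting
\[
    \pi^s-\pi^t = \frac{e^{-U^s}-e^{-U^t}}{Z_s} + e^{-U^t}\Bigl(\tfrac{1}{Z_s}-\tfrac{1}{Z_t}\Bigr),
\]
which reduces the estimate to two contributions controlled by \Cref{prop:Moreau_cont} and \Cref{cor:cont_normalization}, respectively. Applying the triangle inequality and integrating term by term then gives
\[
    \|\pi^s-\pi^t\|_{TV} \leq \frac{1}{Z_s}\int_{\R^d}|e^{-U^s}-e^{-U^t}|\;dx + \frac{|Z_s-Z_t|}{Z_s},
\]
where I have used $\int e^{-U^t}\,dx = Z_t$ to simplify the second term.

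For the first term, since $U^\tau=F+M_G^\tau$ we have $e^{-U^s}-e^{-U^t}=e^{-F}(e^{-M_G^s}-e^{-M_G^t})$, and the mean value theorem yields the pointwise bound $|e^{-a}-e^{-b}|\leq e^{-\min(a,b)}|a-b|$. By \Cref{lemma:moreau_diffble} the derivative $\partial_\tau M_G^\tau(x)=-\frac{1}{2\tau^2}\|x-p_\tau\|^2\leq 0$, so $\tau\mapsto M_G^\tau(x)$ is non-increasing; hence for $s,t<t_{max}$ we have $\min(M_G^s(x),M_G^t(x))\geq M_G^{t_{max}}(x)$ and therefore $e^{-\min}\leq e^{-M_G^{t_{max}}(x)}$. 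Combining this with the bound of \Cref{prop:Moreau_cont} gives the dominated pointwise estimate
\[
    |e^{-U^s(x)}-e^{-U^t(x)}| \leq \frac{|s-t|}{2}\bigl(\phi(\|x\|)+C\bigr)^2\,e^{-U^{t_{max}}(x)}.
\]
Integrating and expanding the square, \Cref{ass}(4) (together with Cauchy--Schwarz and finiteness of $Z_{t_{max}}$, which itself follows from \Cref{ass}(4) and the coercivity of $G$) ensures that $A\coloneqq\int_{\R^d}(\phi(\|x\|)+C)^2 e^{-U^{t_{max}}(x)}\,dx<\infty$, so the first term is at most $\frac{A}{2Z_s}|s-t|$.

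The second term is bounded by $L_Z|s-t|/Z_s$ using the Lipschitz continuity of $\tau\mapsto Z_\tau$ with some constant $L_Z$ from \Cref{cor:cont_normalization}. To merge both estimates into a single global Lipschitz constant it remains to bound $Z_s$ away from zero on $(0,t_{max})$. Since $U^\tau$ is non-increasing in $\tau$ (by the monotonicity of $M_G^\tau$ above), $e^{-U^\tau}$ is non-decreasing, hence $Z_\tau$ is non-decreasing, and monotone convergence gives $\inf_{s\in(0,t_{max})}Z_s=\lim_{s\to 0^+}Z_s=Z=\int e^{-U}>0$ by \Cref{ass}(1). Thus $Z_s\geq Z>0$ and
\[
    \|\pi^s-\pi^t\|_{TV}\leq \Bigl(\tfrac{A}{2}+L_Z\Bigr)\frac{|s-t|}{Z},
\]
which is the claimed uniform Lipschitz estimate.

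The main obstacle is securing a single integrable majorant valid for \emph{all} $s,t\in(0,t_{max})$; without it the constant multiplying $|s-t|$ would depend on the base points and only local Lipschitz continuity would follow. This is resolved precisely by the monotonicity of the Moreau envelope in $\tau$ (\Cref{lemma:moreau_diffble}), which lets us dominate uniformly by the endpoint density $e^{-U^{t_{max}}}$, and by the tailored moment condition \Cref{ass}(4). A secondary technical point, the uniform lower bound on $Z_s$, is handled by the same monotonicity argument.
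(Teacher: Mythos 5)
Your proof is correct, and it uses the same two in-paper ingredients the authors rely on (\Cref{prop:Moreau_cont} and \Cref{cor:cont_normalization}), but the route is genuinely different in how those ingredients are combined: the paper's proof is a one-line citation which additionally invokes \cite[Theorem 6.15]{villani2009optimal} --- a result that controls \emph{Wasserstein} distances by weighted total variation --- whereas you give a self-contained elementary estimate that never leaves the TV setting. Your version makes explicit exactly the points that the citation-style proof glosses over: (i) the decomposition of $\pi^s-\pi^t$ into a density-difference term and a normalization term; (ii) the uniform integrable majorant $\frac{1}{2}(\phi(\|x\|)+C)^2 e^{-U^{t_{max}}(x)}$, obtained by combining the mean value theorem with the monotonicity of $\tau\mapsto M_G^\tau(x)$ (which indeed follows from \Cref{lemma:moreau_diffble}, or directly from the definition of the envelope), so that the constant is uniform over $s,t\in(0,t_{max})$ rather than merely local; and (iii) the uniform lower bound $Z_s\geq Z>0$, for which there is an even shorter argument than your monotone-convergence one: $M_G^s\leq G$ pointwise gives $e^{-U^s}\geq e^{-U}$ and hence $Z_s\geq Z$ immediately from \Cref{ass}(1). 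The one spot where you should add a line is your parenthetical claim that $Z_{t_{max}}<\infty$ follows from \Cref{ass}(4) and coercivity: the argument is that $\phi$ is nondecreasing and, since a continuous coercive $G$ cannot have vanishing regular subgradients everywhere, $\phi(s)\geq c>0$ for $s$ large, so $\int_{\|x\|\geq s_0} e^{-U^{t_{max}}}\leq c^{-2}\int \phi(\|x\|)^2 e^{-U^{t_{max}}}<\infty$, while the integral over the ball is finite because $U^{t_{max}}$ is bounded below. This finiteness is also silently needed for \Cref{cor:cont_normalization} itself, so it is a gap in the paper's exposition as much as in yours; overall your argument is the more transparent and verifiable of the two.
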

\begin{proof}
    This follows from \Cref{prop:Moreau_cont}, \Cref{cor:cont_normalization}, and \cite[Theorem 6.15]{villani2009optimal}.

\end{proof}

\subsection{Relation to Denoising Score Matching}\label{sec:relation_to_diffusion}
In order to relate the proposed DAZ to DSM, we define the diffusion potential with temperature parameter $T$, $G^t_T(x)$ as
    \begin{equation}\label{eq:diff_pot}
        G^t_T(x) = -T\log\left(\int_{\R^d} \exp\left(\frac{-G(y) - \frac{1}{2t}\|x-y\|^2}{T}\right)dy\right)
    \end{equation}
which corresponds to a soft-minimum instead of the strict minimum in \eqref{eq:moreau}. The integral in \eqref{eq:diff_pot} matches the potential of a distribution defined via $e^{-G/T}*\Nc(0,tT)$ with $\Nc(0,tT)$ a normal distribution with mean zero and variance $tT$ and $*$ the convolution. For $T=1$ \eqref{eq:diff_pot} is precisely the potential for variance exploding diffusion \cite{song2020score} with target distribution $e^{-G}$. Thus, the proposed method can be understood as the zero temperature limit of diffusion with potential $G^t_T$:
\begin{lemma}
    DAZ is obtained as the zero-temperature limit of regular diffusion. That is, $G^t_T(x)\rightarrow M_G^t(x)$, as $T\rightarrow 0^+$ for any $x$ and, if $\nabla G$ is locally Lipschitz continuous, the convergence is uniformly on compact sets.
\end{lemma}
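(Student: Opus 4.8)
The plan is to recognize the diffusion potential $G^t_T$ as a scaled log-partition function and to analyze its limit as $T\to 0^+$ via a Laplace-type (low-temperature) asymptotic argument. First I would fix $x$ and $t$ and define the inner objective $h(y)\coloneqq G(y)+\frac{1}{2t}\|x-y\|^2$, so that
\[
    G^t_T(x) = -T\log\left(\int_{\R^d}\exp\left(-\frac{h(y)}{T}\right)dy\right).
\]
By \Cref{ass} the function $h$ is proper, continuous, and coercive (since $G$ is coercive and the quadratic term adds coercivity), hence it attains its infimum; moreover for $t<t_{max}$ the minimizer is the unique prox point $p_t={\prox}_{tG}(x)$, and $\inf_y h(y)=M_G^t(x)$ by definition \eqref{eq:moreau}.

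The key step is the two-sided bound. For the lower bound on $G^t_T(x)$, I would simply use $h(y)\geq M_G^t(x)$ pointwise, which gives $\int \exp(-h(y)/T)\,dy\leq \exp(-M_G^t(x)/T)\cdot(\text{something that must be controlled})$; more carefully, factoring out the minimum,
\[
    G^t_T(x) = M_G^t(x) - T\log\left(\int_{\R^d}\exp\left(-\frac{h(y)-M_G^t(x)}{T}\right)dy\right),
\]
so the claim reduces to showing that the remaining $-T\log(\cdots)$ term vanishes as $T\to0^+$. Since the integrand $\exp(-(h-M_G^t)/T)\leq 1$ everywhere and equals $1$ at $y=p_t$, the integral is bounded, and by coercivity of $h$ it is finite; the lower-bound direction (the integral is at least the contribution of a small ball around $p_t$, of order $T^{d/2}$ by continuity of $h$) shows $-T\log(\cdots)$ is at most $O(T\log(1/T))\to0$, while the upper-bound direction ($-T\log(\cdots)\geq -T\log(\text{finite})\to0$) closes the estimate. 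This yields pointwise convergence $G^t_T(x)\to M_G^t(x)$.

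For the uniform-on-compacts statement under local Lipschitz continuity of $\nabla G$, I would exploit that $h$ is then locally $C^{1,1}$ and, near $p_t$, behaves quadratically: a second-order Taylor expansion of $h$ around $p_t$ gives $h(y)-M_G^t(x)\leq \frac{L}{2}\|y-p_t\|^2$ locally (with $L$ the local Lipschitz/curvature constant uniform over the compact set), so the Laplace integral is controlled from below uniformly by a Gaussian of width $\sqrt{T}$, and from above by the global bound using coercivity. The uniformity over $x$ in a compact set follows because $p_t$ depends continuously on $x$ (via \Cref{lemma:prox_time_cont} and the analogous spatial continuity of the prox), so the constants in the Taylor and coercivity estimates can be chosen uniformly. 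The main obstacle is making the Laplace asymptotics rigorous without assuming smoothness in the first part and, in the uniform case, verifying that the tail of the integral outside a neighborhood of $p_t$ is negligible uniformly in $x$; this requires a uniform coercivity estimate, i.e. a lower bound $h(y)-M_G^t(x)\geq c\,\|y-p_t\|$ (or a growth condition) holding uniformly as $x$ ranges over the compact set, which is where the coercivity assumption in \Cref{ass} does the essential work.
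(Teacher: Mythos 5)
Your argument is correct in substance, but it is not the paper's route: the paper's entire proof is the one-line remark that the lemma ``is an extension of the convergence $\|\cdot\|_p\rightarrow\|\cdot\|_\infty$ as $p\rightarrow\infty$.'' Concretely, with $h(y)=G(y)+\frac{1}{2t}\|x-y\|^2$ and $f=e^{-h}$, one has $G^t_T(x)=-\log\|f\|_{L^{1/T}}$ and $M_G^t(x)=-\log\|f\|_{L^\infty}$, so the paper invokes the classical $L^p\rightarrow L^\infty$ limit with $p=1/T$ (applicable because $f\in L^{p_0}$ for every $p_0>0$ thanks to the Gaussian tail of $h$). Your two-sided Laplace estimate is precisely what a written-out proof of that classical fact looks like, and it buys something the bare citation does not: it extends directly to the uniform-on-compacts statement via the quadratic Taylor bound near $p_t$ and continuity of the prox, which is the part of the lemma the $L^p$ analogy leaves untouched.

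Two details in your write-up need tightening, both fixable. First, in the pointwise part, the claim that the ball contribution is ``of order $T^{d/2}$ by continuity of $h$'' overreaches: continuity alone gives, for each $\epsilon>0$, a radius $r$ with $h-M_G^t(x)\leq\epsilon$ on $B_r(p_t)$, hence the lower bound $|B_r(p_t)|\,e^{-\epsilon/T}$ and $\limsup_{T\rightarrow0^+}\bigl(-T\log(\cdots)\bigr)\leq\epsilon$; letting $\epsilon\rightarrow0$ closes the argument, but the $O(T\log(1/T))$ rate is only available in the smooth case where your quadratic bound applies. Second, for the opposite inequality you need the integral bounded uniformly in $T$, not merely finite for each fixed $T$: since $h-M_G^t(x)\geq0$, the integrand $\exp(-(h-M_G^t(x))/T)$ is monotone increasing in $T$, so for $T\leq T_0$ the integral is dominated by its value at $T_0$, which is finite because $G$ bounded from below gives $h(y)-M_G^t(x)\geq\frac{1}{2t}\|x-y\|^2-C$; note that plain coercivity of $h$ (divergence without a rate) would not by itself guarantee integrability, so it is this quadratic growth, not coercivity per se, that does the work. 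With these repairs, both the pointwise and the uniform claims follow as you outline.
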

\begin{proof}
    The result is an extension of the convergence $\|\;\cdot\;\|_p\rightarrow\|\;\cdot\;\|_\infty$ as $p\rightarrow\infty$. 
\end{proof}

\section{Numerical Experiments}\label{sec:experiments}
Since verifying convergence of sampling algorithms is challenging in high dimensions we begin with two small-scale examples for one-dimensional distributions for which error metrics on the space of probability distributions can be estimated with high accuracy. Afterwards we consider two high-dimensional examples, namely denoising with a total variation prior first on a chain and then on images. In all experiments we simulate multiple Markov chains in parallel in order to compute the total variation (TV) distance between the target density and the empirical distribution induced by the samples. The sequence of Moreau parameters $(t_n)_n$ (see \Cref{algo}) is defined using \texttt{NumPy logspace}\footnote{https://numpy.org/doc/2.1/reference/generated/numpy.logspace.html} between the smallest and largest Moreau parameter. Moreover, we only perform one step of ULA per Moreau parameter (i.e., $K=1$ in \Cref{algo}).

\subsection{Experiments in 1D}
For 1D distributions as a ground truth target we use the true density $\pi(x)$ since in this setting the normalization $Z=\int e^{-U(x)}dx$ is known.
\subsubsection{Laplace distribution}
Let the potential be defined as $U(x) = G(x) = \lambda |x|$, $x\in\R$, with $\lambda=1$. In this case, the Moreau envelope is equal to the Huber functional $M_G^t(x) = \lambda|x|-\frac{t\lambda^2}{2}$ for $|x|>t\lambda$ and $M_G^t(x) = \frac{x^2}{2t}$ else. It follows that $\nabla M_G^t$ is Lipschitz continuous with Lipschitz constant $L=1/t$ and in order to sample from $\pi^t$ we can use ULA with step size $\tau=t$. A comparison of the total variation convergence of the proposed DAZ and sampling with MYULA \cite{durmus2018efficient} is depicted in \Cref{fig:mixture_potentials}.

\subsubsection{Gaussian Mixture}
We consider a bimodal (and, thus, not log-concave) Gaussian mixture $\pi(x) = \frac{1}{2}(\Nc(x; \mu_1,\sigma_1) + \Nc(x; \mu_2,\sigma_2))$. We choose $\mu_1 = -1$, $\mu_2=1$ and $\sigma_i = 0.25$, $i=1,2$ (see also \Cref{fig:moreau_gm}). Again, the convergence in TV in comparison to MYULA is depicted in \Cref{fig:mixture_potentials}. In \Cref{fig:mix_hist} we show the sample distribution after 1000 iterations of MYULA and DAZ indicating that DAZ leads to faster mixing due to: a) as shown in \Cref{fig:moreau_gm} with large Moreau parameters the separation between modes is reduced and b) for large Moreau parameters $\nabla U(x)$ grows slower, which allows us to use larger step sizes leading to better mixing (cf. the proof of convergence of the explicit scheme \cite[Proposition 5.3]{fruehwirth2024ergodicity}; note that, whereas in the differentiable case the inverse Lipschitz constant of $\nabla U(x)$ limits the step size, here the upper bound corresponds to the reciprocal \emph{growth constand} of $\nabla U(x)$).

\begin{figure}
    \centering
    \includegraphics[width=0.4\textwidth]{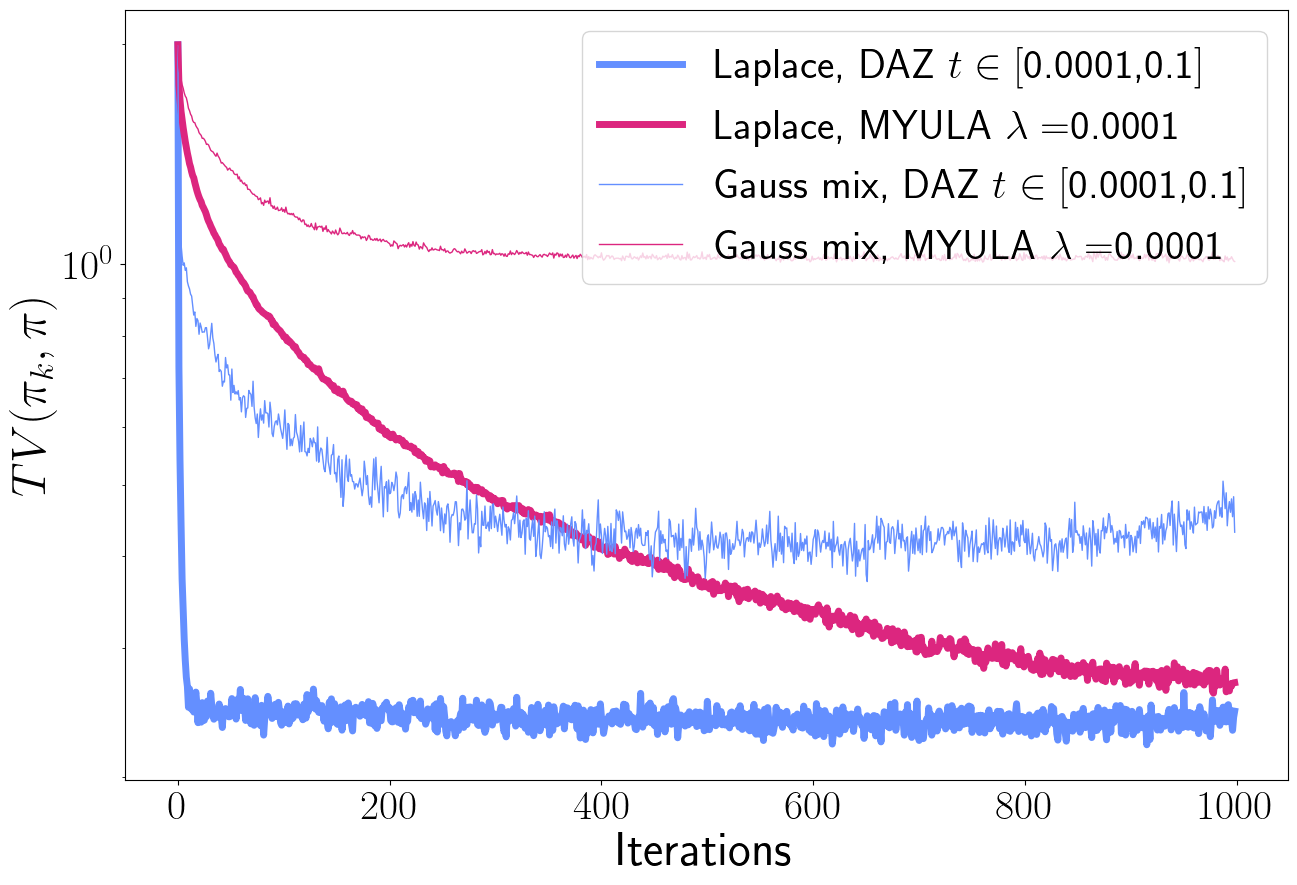}
    \caption{Sampling from a Laplace distribution and a bimodal Gaussian mixture: Comparison of the convergence speed of DAZ and MYULA \cite{durmus2018efficient} with Moreau parameter $\lambda$ with respect to the total variation error. }
    \label{fig:mixture_potentials}
\end{figure}

\begin{figure}
    \centering
    \includegraphics[width=0.4\textwidth]{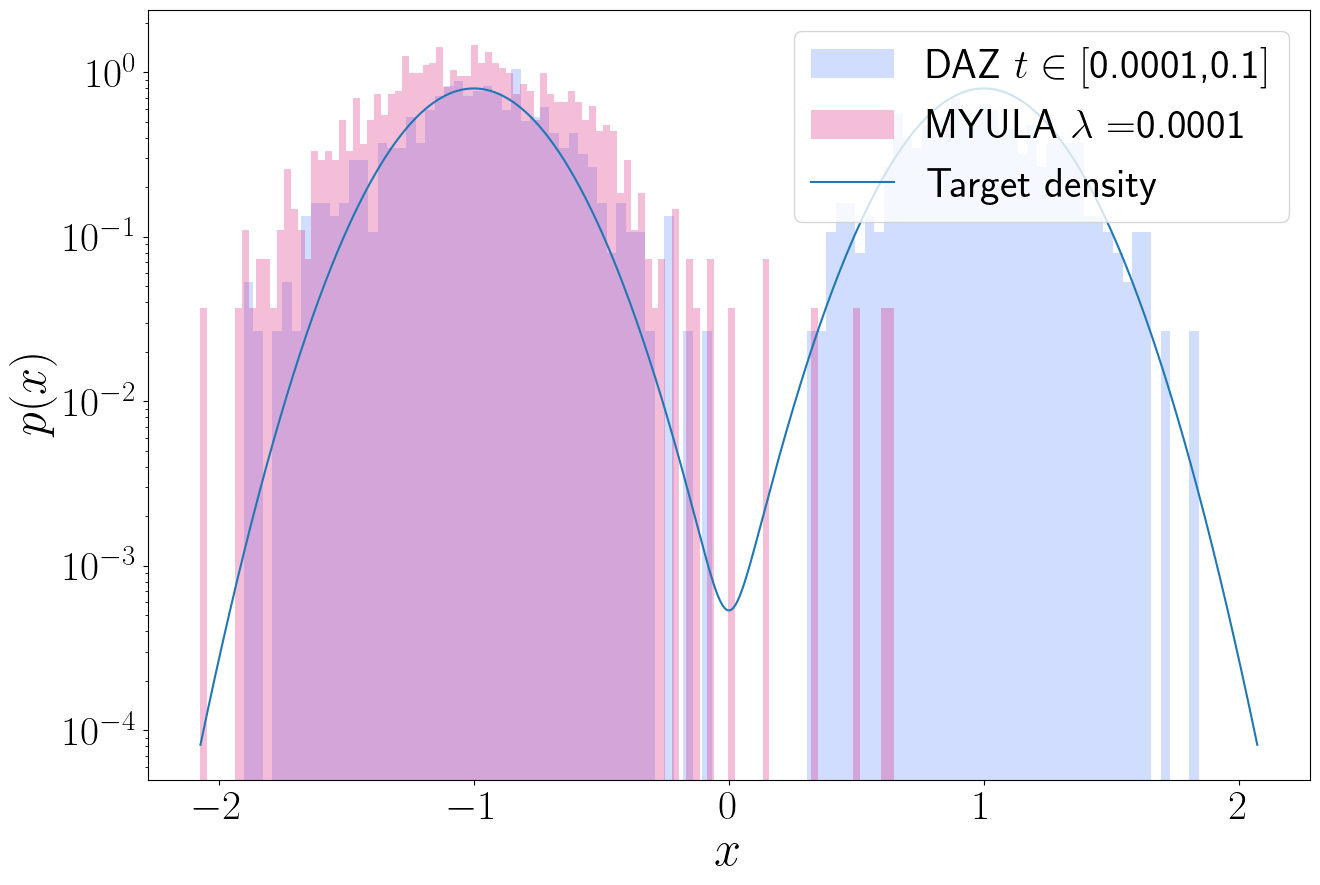}
    \caption{Histogram of the obtained samples for MYULA and DAZ after 1000 iterations compared to the target density.}
    \label{fig:mix_hist}
\end{figure}

\subsection{High-dimensional Experiments}
In higher dimensions evaluation of the sample convergence is rather difficult, as we cannot compute the normalization $Z=\int U(x)dx$ efficiently. Thus, for the following experiments, we use belief propagation (BP) \cite{tapfre03,knobelreiter2020belief,narnhofer2022posterior} to obtain estimates of the marginal distributions with high accuracy to which the obtained samples are compared in TV distance.

\subsubsection{TV-L2 denoising for chains}
We define $F,G:\R^d\rightarrow \R$ as $F(x)\coloneqq \frac{1}{2\sigma^2}\|x-y\|^2$ and $G(x) = \lambda\sum_i |x_{i+1}-x_i|$. We set $d=100$, $\sigma=0.1$, and $\lambda=30$. In order to compute the proximal mapping of $\lambda G$ we use the dynamic programming based direct method proposed in \cite{pock2016total}. The convergence results in TV distance are shown in \Cref{fig:TV_chain}. Note that, $\Pi_i(\pi)$ in the $y$-axis label denotes the $i$-th marginal of the density. Thus, we depict the mean of the TV distances over all marginals. As can be seen, the DAZ again yields a significantly increased convergence speed due to the possibility of using larger step sizes early during the simulation.

\begin{figure}
    \centering
    \includegraphics[width=0.4\textwidth]{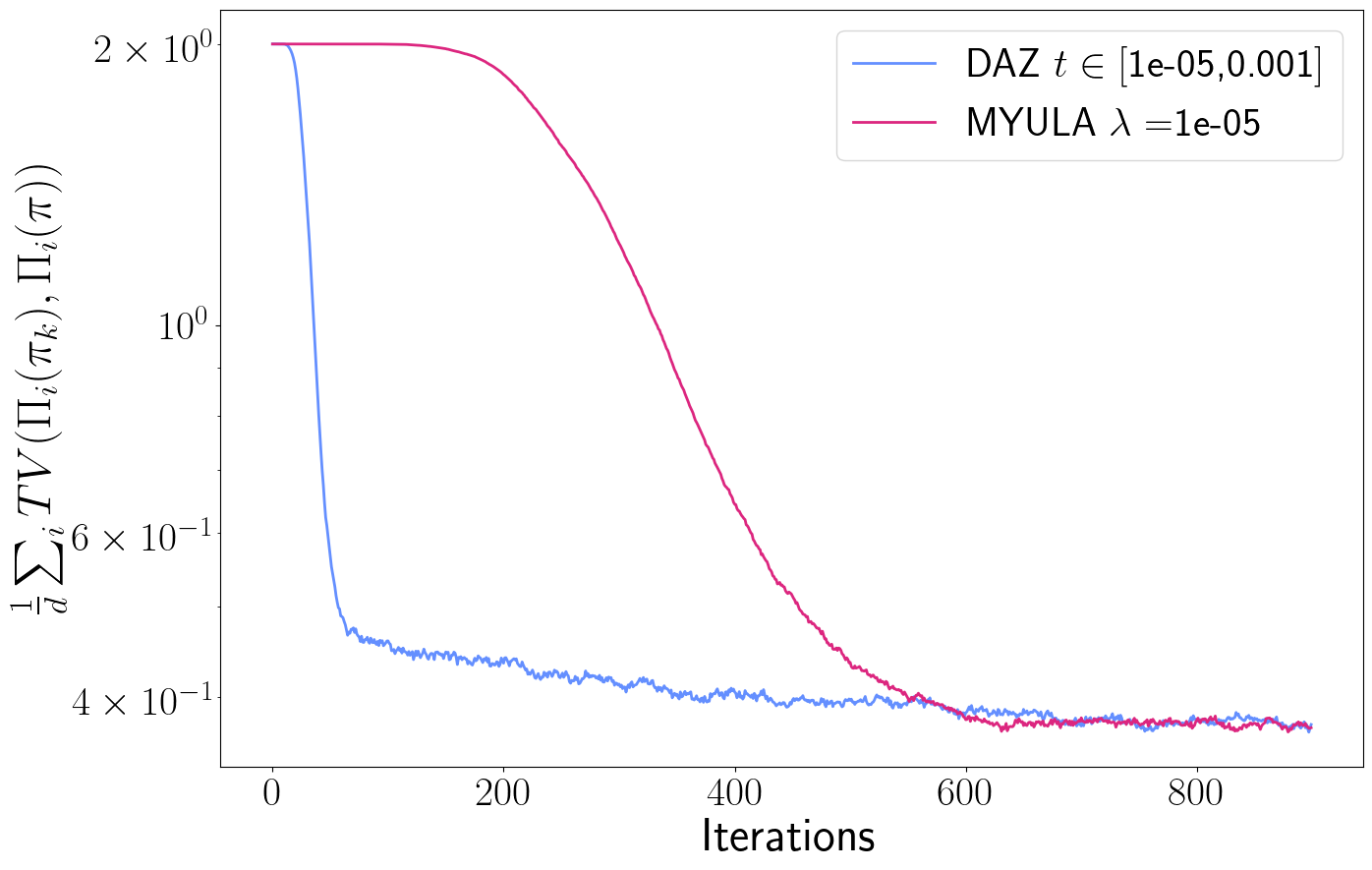}
    \caption{TV denoising on a chain.}
    \label{fig:TV_chain}
\end{figure}

\subsubsection{TV-L2 denoising for images}
For the image denoising example we consider anisotropic total variation for $G$. That is, $F,G:\R^{N\times M}\rightarrow \R$ with $F(x)\coloneqq \frac{1}{2\sigma^2}\|x-y\|^2$ again and $G(x) = \lambda\sum_{i,j} |(Kx)_{i,j}^1|+|(Kx)_{i,j}^2|$ with $K:\R^{N\times M}\rightarrow \R^{N\times M\times 2}$ the finite differences operator \cite[Section 6.1]{chambolle2011first}. We set $\sigma=0.05$, and $\lambda=30$. In order to compute the proximal mapping of $\lambda G$ we use again \cite{pock2016total}. As before we computed the average TV error across all image pixels and the results are depicted in \Cref{fig:TV_image}.
\begin{figure}
    \centering
    \includegraphics[width=0.4\textwidth]{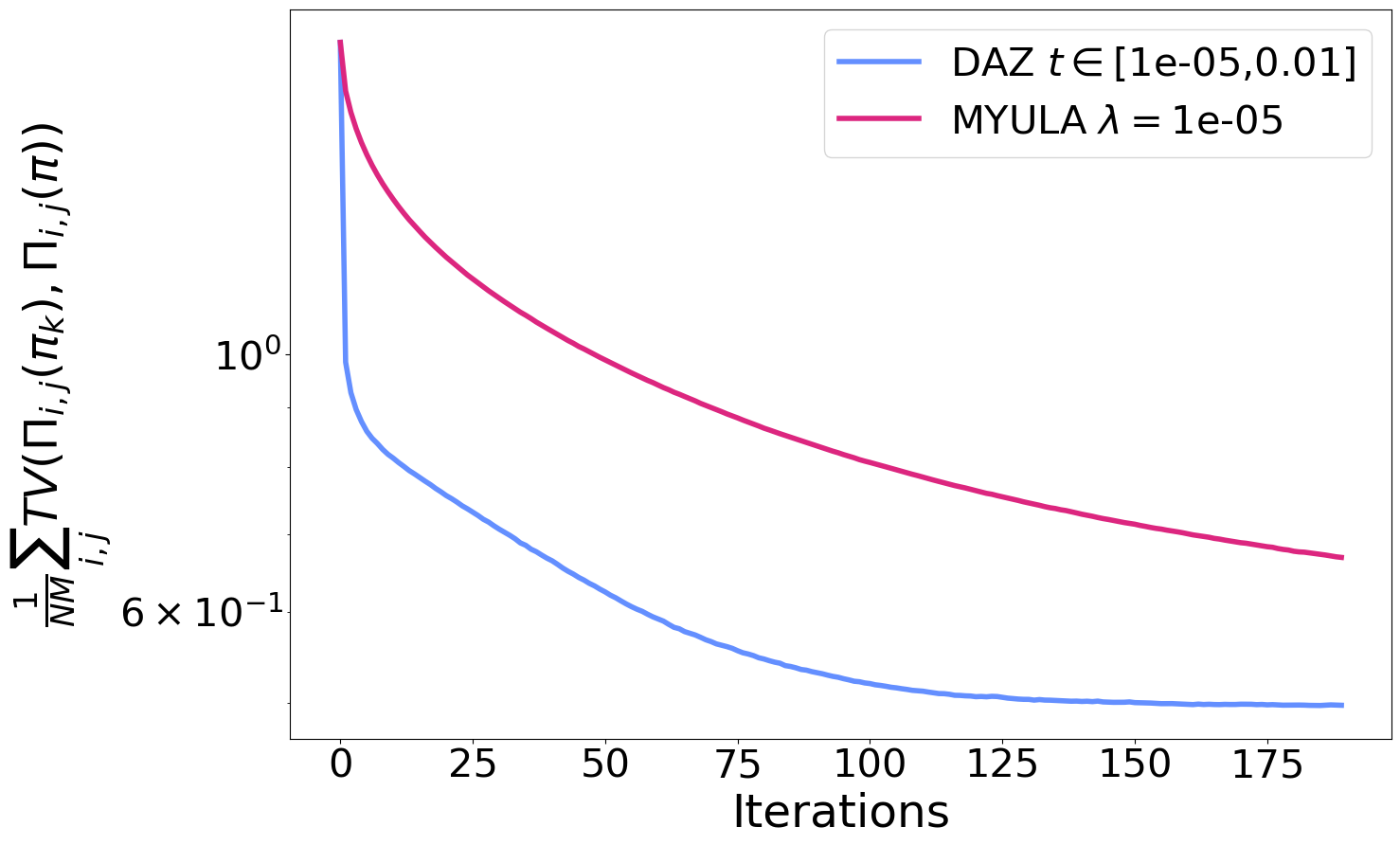}
    \caption{TV denoising on images.}
    \label{fig:TV_image}
\end{figure}

\section{Conclusion and Future Work}
The proposed method shows promising empirical results and we are currently working on a thorough theoretical analysis as well as additional empirical experiments with more complex data driven functionals $G(x)$ such as total deep variation \cite{kobler2020total}.

\bibliographystyle{IEEEtran}
\bibliography{IEEEabrv,references}

\end{document}